\pdfoutput=1
\documentclass[a4paper,12pt]{amsart}
\usepackage[margin=2.5cm]{geometry}
\usepackage[utf8]{inputenc}
\usepackage[T1]{fontenc}
\usepackage[charter]{mathdesign}
\usepackage{microtype}
\usepackage{hyperref}
\hypersetup{
colorlinks=true,
citecolor=[rgb]{0,0,0.75},
linkcolor=[rgb]{0,0,0.75},
urlcolor=blue,
bookmarks=true,
pdfpagemode=UseNone,
pdfstartview=FitH,
pdfdisplaydoctitle=true,
pdfborder={0 0 0}, 
pdftitle={Recurrence for non-commuting transformations},
pdfauthor={Nikos Frantzikinakis and Pavel Zorin-Kranich},
pdfsubject={Mathematics Subject Classification (2010): 37A15},
pdfkeywords={multiple recurrence, non-commuting transformations},
pdflang=en-US
}
\usepackage[safeinputenc=true,backref=true,style=alphabetic,giveninits,maxnames=4,useprefix=true,doi=false,eprint=true,isbn=false,url=false]{biblatex}

\addbibresource{pzorin-ergodic-MR.bib}
\newbibmacro{string+doiurl}[1]{%
  \iffieldundef{doi}{%
    \iffieldundef{url}{%
         #1%
      }{%
      \href{\thefield{url}}{#1}%
    }%
  }{%
    \href{http://dx.doi.org/\thefield{doi}}{#1}%
  }%
}
\DeclareFieldFormat[article,misc]{title}{\usebibmacro{string+doiurl}{\mkbibquote{#1}}}
\DefineBibliographyStrings{english}{%
  backrefpage = {$\uparrow$},%
  backrefpages = {$\uparrow$}%
}

\makeatletter
\def\woMR#1{\w@MR#1MR#1MR\relax}%
\def\w@MR#1MR#2MR#3\relax{#2}

\def\MR@URL#1 #2\relax{http://www.ams.org/mathscinet-getitem?mr=#1}
\def\MRnumber#1{\href{\MR@URL#1 \relax}{\nolinkurl{\woMR#1}}}
\makeatother

\DeclareFieldFormat{mrnumber}{\mkbibacro{MR}\addcolon\space\MRnumber{#1}}
\newbibmacro*{doi+eprint+url}{%
  \iftoggle{bbx:doi}
    {\printfield{doi}}
    {}%
  \newunit\newblock
  \iftoggle{bbx:eprint}
    {\usebibmacro{eprint}}
    {}%
  \newunit\newblock
  \iftoggle{bbx:url}
    {\usebibmacro{url+urldate}}
    {}%
  \newunit\newblock
  \printfield{mrnumber}}

\setcounter{secnumdepth}{1}
\numberwithin{equation}{section}
\newtheorem{theorem}[equation]{Theorem}
\newtheorem{lemma}[equation]{Lemma}
\newtheorem{proposition}[equation]{Proposition}
\newtheorem{corollary}[equation]{Corollary}
\theoremstyle{definition}

\theoremstyle{remark}

\newcommand*{\N}{\mathbb{N}}
\newcommand*{\Z}{\mathbb{Z}}

\newcommand*{\Q}{\mathbb{Q}}
\newcommand*{\T}{\mathbb{T}}
\newcommand*{\dif}{\mathrm{d}}

\newcommand*{\E}{\mathbb{E}}

\newcommand{\ud}{\overline{d}}

\newcommand*{\K}{\mathcal{K}}
\newcommand*{\Krat}{\mathcal{K}_{\mathrm{rat}}}

\begin{document}
\subjclass[2010]{37A15}
\title[Recurrence for non-commuting transformations]{Multiple recurrence for non-commuting transformations along rationally independent polynomials}
\author{Nikos Frantzikinakis}
\author{Pavel Zorin-Kranich}
\keywords{Multiple recurrence, non-commuting transformations}
\begin{abstract}
We prove a multiple recurrence result for arbitrary measure-preserving transformations along polynomials in two variables of the form $m+p_{i}(n)$, with rationally independent $p_{i}$'s with zero constant term.
This is in contrast to the single variable case, in which even double recurrence fails unless the transformations generate a virtually nilpotent group.
The proof involves reduction to nilfactors and an equidistribution result on nilmanifolds.
\end{abstract}
\maketitle

\section{Introduction}
The polynomial Szemer\'edi theorem \cite{MR1325795} for commuting invertible measure-preserving transformations $T_{1},\dots,T_{\ell}$ on a probability space $(X,\mu)$ asserts that for every positive measure set $A\subset X$ and any integer polynomials $p_{i}:\Z^{d}\to\Z$ with zero constant term the set of $n\in\Z^{d}$ such that
\begin{equation}
\label{eq:mucap}
\mu(A\cap T_{1}^{p_{1}(n)}A \cap \dots \cap T_{\ell}^{p_{\ell}(n)}A) > 0
\end{equation}
has positive lower density.
This result extends to nilpotent groups of transformations \cite{MR1650102} but fails quite dramatically for groups that are not virtually nilpotent.
Furstenberg constructed weakly mixing measure-preserving transformations $T,S$ on a probability space $(X,\mu)$ such that for some positive measure subset $A\subset X$ we have $\mu(T^{n}A \cap S^{n}A) = 0$ for every $n\in\N$ \cite[p.\ 40]{MR603625}.
His construction has been extended by the first author, Lesigne, and Wierdl to obtain $\mu(T^{a(n)}A \cap S^{b(n)}A) = 0$ for any given injective sequences $a,b : \N \to \Z\setminus\{0\}$, e.g.\ $(n)$ and $(n^{2})$ \cite[Theorem 1.7]{MR3043589}.
The transformations in Furstenberg's example generate a solvable group that is not virtually nilpotent, and it has been shown by Bergelson and Leibman that every such group admits a measure-preserving action for which recurrence fails for some pair of elements in the group \cite{MR2041260}.

Thus it may come as a surprise that we obtain a multiple recurrence result without any algebraic assumptions on the measure-preserving transformations.
\begin{theorem}
\label{thm:recurrence}
Let $T_{1},\dots,T_{\ell}$ be invertible  measure-preserving transformations on a probability space $(X,\mu)$ and $p_{1},\dots,p_{\ell}\in\Z[n]$ be rationally independent polynomials with zero constant term.
Then for every set of positive measure $A\subset X$ and every $\epsilon>0$ the set of pairs $(m,n)$ such that
\[
\mu(A\cap T_{1}^{m+p_{1}(n)}A \cap\dots\cap T_{\ell}^{m+p_{\ell}(n)}A)
> \mu(A)^{\ell+1} - \epsilon
\]
has positive lower density with respect to any F\o{}lner sequence of the form $\Phi_{N}=[1,N]\times [1,b(N)]$ for a sequence $b:\N\to\N$ such that $b(N)\to\infty$ and $b(N)/N^{1/d}\to 0$ as $N\to\infty$, where $d=\max_{i}\deg p_{i}$.
\end{theorem}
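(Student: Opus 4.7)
The strategy is to average the intersection measure over the Følner sequence $\Phi_{N}$ and show that the liminf of this average is at least $\mu(A)^{\ell+1}$; a standard pigeonhole argument then yields that the set of $(m,n)$ for which the intersection exceeds $\mu(A)^{\ell+1}-\epsilon$ has positive lower density with respect to $\Phi_{N}$. So the target inequality becomes
$$\liminf_{N\to\infty}\frac{1}{|\Phi_{N}|}\sum_{(m,n)\in\Phi_{N}}\int_{X}\mathbf{1}_{A}\prod_{i=1}^{\ell}T_{i}^{m+p_{i}(n)}\mathbf{1}_{A}\,\dif\mu \geq \mu(A)^{\ell+1}.$$

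The first step is a reduction to characteristic factors for each transformation individually. Since each copy of $\mathbf{1}_{A}$ appears only under powers of a single $T_{i}$, the lack of commutation among $T_{1},\dots,T_{\ell}$ does not obstruct single-transformation machinery. A van der Corput / PET induction in the variable $n$, exploiting the polynomial structure of the $p_{i}$'s of degree at most $d$, reduces the problem (up to an error vanishing as $N\to\infty$) to the case in which each $\mathbf{1}_{A}$ has been replaced by its conditional expectation onto the $d$-step Host--Kra nilfactor $\HKZ_{d}(T_{i})$. The hypothesis $b(N)/N^{1/d}\to 0$ ensures that the van der Corput shifts in the $n$-direction are absorbed by the much longer $m$-range, so this reduction is compatible with averaging against the asymmetric box $\Phi_{N}$.

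The second and main step is an equidistribution statement on nilmanifolds. After the previous reduction, each $\mathbf{1}_{A}$ is approximated by a continuous function on an inverse limit of nilmanifolds $G_{i}/\Gamma_{i}$ on which $T_{i}$ acts as a nilrotation by some $a_{i}\in G_{i}$. The goal becomes
$$\lim_{N\to\infty}\frac{1}{|\Phi_{N}|}\sum_{(m,n)\in\Phi_{N}}\prod_{i=1}^{\ell}F_{i}\bigl(a_{i}^{m+p_{i}(n)}x_{i}\bigr)=\prod_{i=1}^{\ell}\int F_{i}\,\dif\m_{G_{i}/\Gamma_{i}}$$
for suitable continuous $F_{i}$ and generic starting points $x_{i}$. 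I would prove this by a two-scale argument tailored to the asymmetry of $\Phi_{N}$: the $m$-average over $[1,N]$, being much longer than the polynomial oscillation $|p_{i}(n)|\leq C\,b(N)^{d}=o(N)$, first equidistributes each factor along its own connected component by a Birkhoff/Weyl-type argument; the subsequent $n$-average with rationally independent $p_{i}$'s then decouples the Kronecker components of the distinct $T_{i}$'s. The general nilpotent case follows by induction on the nilpotency step via a Leibman-type horizontal-character criterion, in which rational independence of the $p_{i}$'s (together with the shared variable $m$) rules out non-trivial horizontal characters.

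The main obstacle lies precisely in this equidistribution result on a product of nilmanifolds along the asymmetric Følner sequence $\Phi_{N}$ with $b(N)\ll N^{1/d}$. Off-the-shelf polynomial equidistribution theorems are usually phrased for cubes $[1,N]^{k}$ or Følner sequences with comparable sides, whereas here the two sides grow at very different rates; the specific scaling $b(N)/N^{1/d}\to 0$ appears to be the exact condition that allows one to first use the $m$-direction to ergodize each nilfactor and then use the $n$-direction to decorrelate them. Executing this two-scale equidistribution argument, and combining it with the correct horizontal-character analysis to exploit the rational independence of the $p_{i}$'s, constitutes the technical heart of the proof.
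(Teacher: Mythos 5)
Your overall architecture (a seminorm/van der Corput reduction to nilfactors followed by an equidistribution statement on a product of nilmanifolds that exploits the shared variable $m$ and the rational independence of the $p_{i}$) is the same as the paper's, but the analytic claim at its heart is false as stated, and repairing it changes the endgame. You assert that
\[
\E_{(m,n)\in\Phi_{N}}\prod_{i=1}^{\ell}F_{i}\bigl(a_{i}^{m+p_{i}(n)}x_{i}\bigr)\to\prod_{i=1}^{\ell}\int F_{i}\,\dif\m_{G_{i}/\Gamma_{i}}.
\]
This decoupling fails whenever some $T_{i}$ has a nontrivial rational Kronecker factor. Take $\ell=2$, each $G_{i}/\Gamma_{i}=\Z/2$ with $a_{i}=1$, $F_{i}=1_{\{0\}}-1/2$, $p_{1}(n)=n$, $p_{2}(n)=n^{2}$: since $n\equiv n^{2}\pmod 2$, the product $F_{1}(m+n)F_{2}(m+n^{2})$ is identically $1/4$, while $\prod_{i}\int F_{i}=0$. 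This is why the characteristic factor is not trivial (nor the full nilfactor) but the \emph{rational} Kronecker factor $\Krat(T_{i})=\vee_{r}\K_{r}(T_{i})$, and why the paper's equidistribution result (Proposition~\ref{prop:equid}) is restricted to \emph{totally ergodic} nilsystems; disconnected nilmanifolds are handled by passing to a sublattice $(a,b)+r\Z^{2}$ along which each orbit stays inside one connected component. Taken literally, your plan would give the limit $\mu(A)^{\ell+1}$ exactly, which is false. The correct endgame is: after projecting onto $\Krat(T_{i})$ and restricting to a sublattice $r\Z^{2}$ on which $\E(1_{A}|\K_{r}(T_{i}))$ is invariant under $T_{i}^{m+p_{i}(n)}$, the average collapses to $\int 1_{A}\prod_{i}\E(1_{A}|\K_{r}(T_{i}))\,\dif\mu$, and the bound $\mu(A)^{\ell+1}$ comes from a separate convexity inequality (\cite{MR2794947}*{Lemma 1.6}), not from decoupling. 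For the same reason your opening claim, that the liminf of the average over all of $\Phi_{N}$ is at least $\mu(A)^{\ell+1}$, is not what this method establishes; one only gets the bound along a sublattice of positive density, which still suffices for the theorem.

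Two further gaps. First, "generic starting points" is not enough: the systems are not assumed ergodic, so one cannot condition on Host--Kra nilfactors; the paper instead decomposes $1_{A}$ pointwise into a nilsequence part, a $U^{k}$-uniform part and an $L^{1}$-small part (Theorem~\ref{thm:structure}), and the nilsequence part is anchored at an auxiliary point $y_{i}$ over which one has no control. Hence the equidistribution must hold for \emph{every} tuple of base points, a point the paper explicitly flags as crucial; moreover one needs the nilsequence part to be killed by the projection $P$ onto its "rational" component (Corollary~\ref{cor:structure-Krat}) so that the relevant integral over the connected component vanishes. Second, the hypothesis $b(N)/N^{1/d}\to 0$ is used in the seminorm estimate (Theorem~\ref{thm:Zk-char}), not in the equidistribution step, which holds for arbitrary F\o{}lner sequences in $\Z^{2}$; you have attributed the asymmetry of $\Phi_{N}$ to the wrong part of the argument.
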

This gives a partial answer to a problem from \cite{MR2995648}.
The question whether a similar recurrence result (without the lower bound) for non-commuting transformations holds for rationally dependent polynomials remains open.
In particular, even the case $\ell=3$, $p_{1}(n)=0$, $p_{2}(n)=n$, $p_{3}(n)=2n$ is open.

The lower bound $\mu(A)^{\ell+1}$ is optimal (as can be seen considering weakly mixing transformations and using Proposition~\ref{prop:Krat-char}) and new even for commuting transformations.
We note that in the case of a not necessarily ergodic single transformation $T_{1}=\dots=T_{\ell}=T$ and without the extra variable $m$ the corresponding bound for \eqref{eq:mucap} is known for families of linearly independent polynomials \cite{MR2254556}.
If $T$ is ergodic then there is also a lower bound for \eqref{eq:mucap} for the families $(n,2n)$, $(n,2n,3n)$ \cite{MR2138068}, and some other families of polynomials \cite[Theorem C]{MR2415080}.
This result fails for $(n,2n)$ without the ergodicity assumption and for $(n,2n,3n,4n)$ even with the ergodicity assumption \cite{MR2138068}.

For commuting transformations $T_{1},\dots,T_{\ell}$ the corresponding lower bounds are known for families similar to $(n,n^{2},\dots,n^{\ell})$ without ergodicity assumptions \cite{MR2795725} and the family $(n,n)$ if $T_{1}$ and $T_{2}$ are jointly ergodic \cite{MR2794947}, although in the latter case the optimal bound is $\mu(A)^{4}$ and not $\mu(A)^{3}$ as one would expect from the weakly mixing case.

By the Furstenberg correspondence principle (see e.g.\ \cite[Proposition 4.1]{MR2354320} for an appropriate version), Theorem~\ref{thm:recurrence} has the following combinatorial consequence.
\begin{corollary}
Let $G$ be a countable amenable group with a fixed right F\o{}lner sequence and let $\ud$ denote either the upper density or the upper Banach density on $G$.
Let also $g_{1},\dots,g_{\ell}\in G$ and $p_{1},\dots,p_{\ell}\in \Z[n]$ be rationally independent polynomials with zero constant term.
Then for every set $E\subset G$ with $\ud(E)>0$ and every $\epsilon>0$ the set of pairs $(m,n)$ such that
\[
\ud(E\cap Eg_{1}^{m+p_{1}(n)} \cap\dots\cap Eg_{\ell}^{m+p_{\ell}(n)})
>
\ud(E)^{\ell+1} - \epsilon
\]
has positive lower density with respect to any F\o{}lner sequence as in Theorem~\ref{thm:recurrence}.
\end{corollary}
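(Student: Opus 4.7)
The plan is to deduce this combinatorial statement from Theorem~\ref{thm:recurrence} by a direct application of the Furstenberg correspondence principle for countable amenable groups, in the form of \cite{MR2354320}*{Proposition 4.1}. First I would feed the set $E\subset G$ together with the chosen right F\o{}lner sequence into that correspondence to obtain an invertible measure-preserving $G$-action $(T_{g})_{g\in G}$ on some probability space $(X,\mu)$ and a set $A\subset X$ with $\mu(A)=\ud(E)$ satisfying
\[
\ud(E\cap Eh_{1}\cap\dots\cap Eh_{k}) \;\geq\; \mu(A\cap T_{h_{1}}A\cap\dots\cap T_{h_{k}}A)
\]
for every tuple $h_{1},\dots,h_{k}\in G$. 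The same principle handles both the upper density and the upper Banach density option in the statement.

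Second, I would set $T_{i}:=T_{g_{i}}$ for each $i=1,\dots,\ell$ and apply Theorem~\ref{thm:recurrence} to these $\ell$ invertible transformations (which are a priori non-commuting, exactly as the theorem allows), to the set $A$, to the polynomials $p_{1},\dots,p_{\ell}$, and to the given $\epsilon>0$. Since $(T_{g})_{g\in G}$ is a group action we have $T_{g_{i}^{k}}=T_{i}^{k}$ for every $k\in\Z$, so Theorem~\ref{thm:recurrence} produces a set of pairs $(m,n)\in\Z^{2}$ of positive lower density with respect to any F\o{}lner sequence $(\Phi_{N})$ as in its statement on which
\[
\mu\bigl(A\cap T_{1}^{m+p_{1}(n)}A\cap\dots\cap T_{\ell}^{m+p_{\ell}(n)}A\bigr) \;>\; \mu(A)^{\ell+1}-\epsilon.
\]

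Finally, for each such pair $(m,n)$ I would invoke the correspondence inequality with $h_{i}=g_{i}^{m+p_{i}(n)}$ and substitute $\mu(A)=\ud(E)$; this immediately yields the claimed lower bound for $\ud(E\cap Eg_{1}^{m+p_{1}(n)}\cap\dots\cap Eg_{\ell}^{m+p_{\ell}(n)})$ on a set of pairs of positive lower density. The argument is essentially bookkeeping: the only point demanding attention is checking that the cited correspondence is formulated in a convention matching right translation, so that the intersection $E\cap Eh$ corresponds to $A\cap T_{h}A$ rather than to $A\cap T_{h\inv}A$. Beyond this, I do not expect any genuine obstacle, since all the substantive work is absorbed into Theorem~\ref{thm:recurrence}.
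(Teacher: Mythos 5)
Your proposal is correct and is exactly the argument the paper intends: the corollary is stated as a direct consequence of Theorem~\ref{thm:recurrence} via the Furstenberg correspondence principle in the form of \cite{MR2354320}*{Proposition 4.1}, with no further proof given. Your attention to the right-translation convention is the only subtlety, and you have handled it appropriately.
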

We note that throughout this article the assumption that the polynomials vanish at zero can be relaxed to joint intersectivity, see \cite{MR2435427} for the definition of this property.

\section{An equidistribution result on nilmanifolds}
A ($k$-step) \emph{nilmanifold} is a quotient space $X=G/\Gamma$, where $G$ is a ($k$-step) nilpotent Lie group and $\Gamma\leq G$ is a discrete cocompact subgroup, with the unique $G$-invariant probability measure.
A \emph{nilsystem} $(X,T)$ is a nilmanifold $X=G/\Gamma$ with a map of the form $Tx=ax$, $a\in G$.
It is known that every ergodic nilsystem is uniquely ergodic, see e.g.\ \cite[\textsection 2.19]{MR2122919}.
A \emph{($k$-step) basic nilsequence} is a sequence of the form $(f(T^{n}x))$, where $(X,T)$ is a ($k$-step) nilsystem, $x\in X$, and $f\in C(X)$.
Without loss of generality the nilsystem in the definition of a basic nilsequence can be taken to be ergodic since every nilsystem is a disjoint union of ergodic nilsystems by \cite[Remark 2.22]{MR2122919}.
A \emph{$k$-step nilsequence} is a uniform limit of $k$-step basic nilsequences.

Recall that a map $g : \Z^{d} \to X$ to a topological space $X$ with a Borel measure $\mu$ is said to be \emph{well-distributed} on $(X,\mu)$ if for every F\o{}lner sequence $(\Phi_{N})$ in $\Z^{d}$ and every continuous function $f\in C(X)$ we have $\E_{n\in \Phi_{N}} f(g(n)) \to \int_{X} f \dif\mu$ as $N\to\infty$ (here and later we denote averages by $\E_{n\in\Phi}=\frac{1}{|\Phi|}\sum_{n\in\Phi}$).
The basic well-distribution criterion for nilmanifolds is due to Leibman.
\begin{lemma}[{\cite[\textsection 1.10]{MR2122920}}]
\label{lem:equid-mod-comm}
Let $X=G/\Gamma$ be a nilmanifold, $a_{i}\in G$, $p_{i}:\Z^{d}\to\Z$ be polynomials, $i=1,\dots,\ell$, and $x\in X$.
Then the following are equivalent.
\begin{enumerate}
\item The sequence $(a_{1}^{p_{1}(n)}\dots a_{\ell}^{p_{\ell}(n)}x)_{n\in\Z^{d}}$ is well-distributed on $X$,
\item The sequence $(a_{1}^{p_{1}(n)}\dots a_{\ell}^{p_{\ell}(n)}x)_{n\in\Z^{d}}$ is dense in $X$,
\item\label{emc:quot-well} The sequence $([G^{o},G^{o}]a_{1}^{p_{1}(n)}\dots a_{\ell}^{p_{\ell}(n)}x)_{n\in\Z^{d}}$ is well-distributed on $[G^{o},G^{o}] \backslash X$ (here $G^{o}$ denotes the connected component of the identity in a Lie group $G$),
\item\label{emc:quot-dense} The sequence $([G^{o},G^{o}]a_{1}^{p_{1}(n)}\dots a_{\ell}^{p_{\ell}(n)}x)_{n\in\Z^{d}}$ is dense in $[G^{o},G^{o}] \backslash X$.
\end{enumerate}
\end{lemma}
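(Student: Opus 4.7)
The implications $(1) \Rightarrow (2)$, $(1) \Rightarrow (3)$, and $(3) \Rightarrow (4)$ are immediate, since well-distribution trivially implies density and is preserved under pushforward by the continuous quotient map $X \to [G^{o},G^{o}]\backslash X$. My plan is to close the cycle by proving $(4) \Rightarrow (2) \Rightarrow (1)$.

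For $(2) \Rightarrow (1)$, the idea is to combine a structural orbit-closure theorem with unique ergodicity. The closure in $X$ of the polynomial orbit $(a_{1}^{p_{1}(n)}\cdots a_{\ell}^{p_{\ell}(n)}x)_{n\in\Z^{d}}$ is a finite union of translates of a sub-nilmanifold $Ly \subset X$, by Leibman's theorem on polynomial orbit closures on nilmanifolds. The density hypothesis forces this closure to equal $X$, and I would then invoke unique ergodicity of polynomial sequences on nilmanifolds (derivable from Parry's theorem for ergodic linear nilsystems via a van der Corput / PET induction on the nilpotency step) to promote density to well-distribution against the Haar measure along any F\o{}lner sequence.

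For $(4) \Rightarrow (2)$, write $H = [G^{o},G^{o}]$ and let $Y \subseteq X$ be the orbit closure. Hypothesis $(4)$ says that the projection of $Y$ to $H\backslash X$ is surjective, so $HY = X$. The task is to upgrade this to $Y = X$, which reduces to showing that $Y$ is $H$-invariant. Writing $Y$ as (a finite union of) translates $Ly$ of a sub-nilmanifold associated to a closed subgroup $L \le G$, one verifies at the Lie-algebra level that $L \supseteq H$: the commutators arising from the identity component of the orbit group generated by the $a_{i}$'s must lie in $L^{o}$, and combined with density of the horizontal projection this forces $L^{o}$ to contain all of $H$, whence $Y$ is $H$-invariant and $Y = HY = X$.

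The main obstacle is precisely $(4) \Rightarrow (2)$: one must show that mere density on the maximal horizontal factor $H\backslash X$ already forces density on $X$. The sub-nilmanifold structure of orbit closures is indispensable here, since without it a sequence could have a dense horizontal shadow yet remain trapped in a proper closed subset of $X$. A subtle point is that the quotient is taken by $[G^{o},G^{o}]$ rather than by $[G,G]$: this allows one to use that commutator subgroups of connected Lie groups are themselves connected, so that $H$ acts transitively on the fibers of the horizontal projection whenever $H$ appears in the orbit group.
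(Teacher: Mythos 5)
This lemma is not proved in the paper at all: it is imported verbatim from Leibman (the reference \cite{MR2122920}*{1.10}), so there is no in-paper argument to compare yours against. Judged on its own terms, your logical skeleton (the cycle $(1)\Rightarrow(3)\Rightarrow(4)\Rightarrow(2)\Rightarrow(1)$ together with the trivial $(1)\Rightarrow(2)$) is sound, and the easy implications are handled correctly: well-distribution implies density because Haar measure has full support, and both properties push forward under the continuous surjection $X\to[G^{o},G^{o}]\backslash X$ (which maps the compact $X$ onto the quotient, so dense images of closed sets are everything).

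The problem is that the two substantive implications are not actually established. For $(2)\Rightarrow(1)$ you invoke ``unique ergodicity of polynomial sequences on nilmanifolds'' together with the orbit-closure theorem; but well-distribution of a polynomial sequence on its orbit closure \emph{is} the main theorem of the very paper being cited, so this step is essentially a restatement rather than a proof (it is at least a correct reduction if one grants that machinery). The genuine gap is in $(4)\Rightarrow(2)$, which you rightly identify as the crux. Your argument hinges on the claim that the structure group $L$ of the orbit closure $Y$ satisfies $L\supseteq H=[G^{o},G^{o}]$, justified by saying that ``the commutators arising from the identity component of the orbit group generated by the $a_{i}$'s must lie in $L^{o}$.'' This does not parse: the group generated by the $a_{i}$'s is countable, so its identity component is trivial, and even after replacing it by a closure the deduction ``density of the horizontal projection forces $L^{o}\supseteq[G^{o},G^{o}]$'' is precisely the content of the theorem and is nowhere derived. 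Note that $HY=X$ gives only $HL\Gamma x$ dense, and extracting $[G^{o},G^{o}]\subseteq L$ from that requires controlling commutators involving $\Gamma$ and the possibly disconnected $L$; the classical fact that a closed \emph{connected} subgroup surjecting onto $G/[G,G]$ must be all of $G$ does not apply directly (indeed $\Gamma$ itself has dense image in the maximal torus without being dense in $G$). The standard proof instead proceeds by induction on the nilpotency class, using the action of the vertical torus $G_{k}/(G_{k}\cap\Gamma)$ on the fibres and a Fourier-analytic or van der Corput argument to show that a nontrivial vertical character obstructing density upstairs would already obstruct density downstairs; none of this is present in your sketch. Your closing remark that connectedness of $H$ is what makes $H$ act transitively on the fibres of $X\to H\backslash X$ is also off the mark: the fibres are by definition the $H$-orbits, so transitivity is automatic and connectedness plays a different role (it is what makes the vertical induction work).
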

The next result tells us that every ergodic nilsystem can be decomposed into finitely many totally ergodic nilsystems.
\begin{lemma}[{\cite[Proposition 2.1]{MR2415080}}]
\label{lem:conn-comp-tot-erg}
Let $(X,T)$ be an ergodic nilsystem.
Then
\begin{enumerate}
\item $(X,T)$ is totally ergodic if and only if $X$ is connected, and
\item there exists $r\in\N$ such that $(X',T^{r})$ is an ergodic nilsystem for every connected component $X'$ of $X$, and in particular is totally ergodic.
\end{enumerate}
\end{lemma}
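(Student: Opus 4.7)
The approach is to apply Leibman's equidistribution criterion (Lemma~\ref{lem:equid-mod-comm}) to the connected-abelian quotient $Y:=[G^o,G^o]\backslash X$, establish (1), and then derive (2) from (1). The easy direction of (1) is almost immediate: if $X$ has $r\geq 2$ connected components, then $T$ permutes this finite set, so some positive power $T^s$ fixes each component setwise, and $T^s$ fails to be ergodic since any component is then a proper $T^s$-invariant set of positive measure.

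For the harder direction of (1), I would assume $X=G/\Gamma$ is connected with ergodic transformation $Tx=ax$ and fix $r\in\N$; the goal is to show $T^r$ is ergodic. By Lemma~\ref{lem:equid-mod-comm} applied to the polynomial $p(n)=rn$, this reduces to density of the orbit $([G^o,G^o]a^{rn}x)_{n\in\Z}$ in $Y$. The key structural fact to invoke is that $Y$ is a torus: $Y$ inherits connectedness from $X$, and the identity component of the quotient group $G/[G^o,G^o]$ is $G^o/[G^o,G^o]$, which is abelian, so $Y$ is a connected compact abelian Lie group. Under this reduction, $T$ descends to rotation by some $\alpha\in Y$, and ergodicity of $T$ on $X$ together with Lemma~\ref{lem:equid-mod-comm} (applied now with $r=1$) gives that $(n\alpha)_{n\in\Z}$ is dense in $Y$. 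To upgrade to density of $(rn\alpha)_{n\in\Z}$ I would use Pontryagin duality: density of $(n\alpha)$ means no nontrivial character $\chi$ of $Y$ satisfies $\chi(\alpha)=1$, and since the character group of a torus is torsion-free, $\chi$ nontrivial forces $\chi^r$ nontrivial, whence $\chi(r\alpha)=\chi^r(\alpha)\neq 1$.

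For (2), let $r$ denote the number of connected components of $X$. Ergodicity of $T$ forces it to cycle through these components in a single orbit (otherwise a union of a proper $T$-orbit of components would violate ergodicity), so $T^r$ preserves each component $X'$. Restricted to $X'$, the transformation $T^r$ is ergodic by a standard unfolding: any nontrivial $T^r$-invariant set $A\subset X'$ gives rise to the nontrivial $T$-invariant set $\bigsqcup_{j=0}^{r-1}T^j A\subset X$, contradicting ergodicity of $T$. Since each $X'$ is a connected nilmanifold (of the form $G^o/(G^o\cap g\Gamma g\inv)$) and $(X',T^r)$ is ergodic, part (1) now yields total ergodicity.

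The main obstacle I anticipate is the structural claim that $Y$ is a torus and the surrounding Lie-theoretic bookkeeping — in particular verifying that $Y$ is genuinely a nilmanifold quotient so that Lemma~\ref{lem:equid-mod-comm} applies cleanly to it, and that the translation element descends as claimed. Once these structural points are in place, the equidistribution criterion together with the short character-theoretic observation reduces total ergodicity to ergodicity with essentially no further work.
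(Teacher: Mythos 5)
The paper does not actually prove this lemma---it is imported verbatim from \cite{MR2415080}*{Proposition 2.1}---so there is no in-paper argument to compare against; I evaluate your proof on its own. Your overall architecture is the natural one: the easy direction of (1) (a power of $T$ fixes each of the finitely many components) is correct, and your derivation of (2) from (1) (ergodicity forces $T$ to cycle the $r$ components transitively, $T^{r}$ is ergodic on each by the unfolding argument, and each component is a connected nilmanifold) is also correct. The problem lies in the hard direction of (1).

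The gap is the assertion that $T$ descends to a \emph{rotation} on $Y=[G^{o},G^{o}]\backslash X$. The quotient $Y$ is indeed a torus, but the induced map is in general a unipotent \emph{affine} transformation, not a translation: since $X$ is connected we have $G=G^{o}\Gamma$, so $a=a_{0}\gamma_{0}$ with $a_{0}\in G^{o}$, $\gamma_{0}\in\Gamma$, and on $Y\cong(G^{o}/[G^{o},G^{o}])/\Delta$ the map $T$ acts as $y\mapsto\phi(y)+\alpha$, where $\phi$ is the unipotent automorphism induced by conjugation by $\gamma_{0}$. A concrete counterexample to your claim is the skew shift $S(x,y)=(x+\alpha,y+x)$ on $\T^{2}$, realized as a $2$-step nilsystem with $G^{o}\cong\R^{2}$ abelian: there $[G^{o},G^{o}]$ is trivial, so $Y=X=\T^{2}$ and the induced map is the skew shift itself. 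The orbit on $Y$ is then a polynomial sequence of degree greater than one, the criterion ``no nontrivial character kills $\alpha$'' is not what density of the orbit means, and the torsion-free-character step collapses. The argument is repairable, but each repair needs an ingredient you did not supply: either first carry out the standard (nontrivial) reduction to $G$ connected and simply connected, after which the relevant quotient genuinely carries a rotation; or keep the affine model and invoke Weyl's criterion for polynomial sequences, noting that if the $\T$-valued polynomial $n\mapsto\chi(S^{n}y_{0})$ has an irrational non-constant coefficient then so does $n\mapsto\chi(S^{rn}y_{0})$, since the coefficients are only multiplied by powers of $r$. Note also that you cannot obtain the affine model from Lemma~\ref{lem:unipotent}, because its hypothesis is total ergodicity---precisely what you are trying to prove.
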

Finally, the systems arising in Lemma~\ref{lem:equid-mod-comm}(\ref{emc:quot-well},\ref{emc:quot-dense}) in the totally ergodic case have a particularly simple algebraic structure.
\begin{lemma}[{\cite[Proposition 3.1]{MR2191231}}]
\label{lem:unipotent}
Let $(X,T) = (G/\Gamma,T)$ be a totally ergodic nilsystem and assume that $G^{o}$ is commutative.
Then $(X,T)$ is topologically isomorphic to a unipotent affine transformation on a torus, i.e.\ a transformation of the form $(\T^{d},S)$, where $Sx=x+Nx+b$ with a nilpotent integer matrix $N$ and $b\in\T^{d}$.
\end{lemma}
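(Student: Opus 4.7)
The plan is to identify $X$ with a torus and then show that the action $T$ becomes affine with unipotent linear part. First, total ergodicity together with Lemma~\ref{lem:conn-comp-tot-erg} tells us that $X$ is connected, so $G=G^{o}\Gamma$ and the inclusion $G^{o}\hookrightarrow G$ induces a homeomorphism $G^{o}/(G^{o}\cap\Gamma)\to X$. Since $G^{o}$ is a connected abelian Lie group and $G^{o}\cap\Gamma$ is a discrete cocompact subgroup, the quotient $G^{o}/(G^{o}\cap\Gamma)$ is a compact connected abelian Lie group, hence isomorphic as a topological group to some torus $\T^{d}$. This is the homeomorphism through which I will transport $T$.

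Next I would unpack the translation. Write $a=g_{0}\gamma_{0}$ with $g_{0}\in G^{o}$ and $\gamma_{0}\in\Gamma$. For any $g\in G^{o}$,
\[
a\,g\,\Gamma
= g_{0}(\gamma_{0}g\gamma_{0}\inv)\gamma_{0}\Gamma
= g_{0}\,\alpha(g)\,\Gamma,
\qquad
\alpha(g):=\gamma_{0}g\gamma_{0}\inv.
\]
Since $G^{o}$ is normal in $G$, the map $\alpha$ is a Lie group automorphism of $G^{o}$, and since $\Gamma$ is a subgroup, $\alpha$ preserves $G^{o}\cap\Gamma$ and descends to a continuous group automorphism $\ol{\alpha}\colon\T^{d}\to\T^{d}$. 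Under the identification $X\cong\T^{d}$, the transformation $T$ therefore becomes the affine map $\bar{x}\mapsto \ol{g_{0}}+\ol{\alpha}(\bar{x})$, with $b:=\ol{g_{0}}\in\T^{d}$.

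The crux is to show that $\ol{\alpha}-I$ is given by a nilpotent integer matrix $N$. Here I would invoke the fact that in any nilpotent Lie group, the adjoint representation is unipotent: the lower central series $G=G_{1}\supset G_{2}\supset\dots\supset G_{k+1}=\{e\}$ is preserved by every inner automorphism, and by definition of the lower central series, conjugation by any element of $G$ acts trivially on each quotient $G_{i}/G_{i+1}$. Passing to the Lie algebra of $G^{o}$, the filtration induced by the $G_{i}\cap G^{o}$ is stable under the derivative $\mathrm{Ad}(\gamma_{0})$ of $\alpha$, which acts trivially on each graded piece; hence $(\mathrm{Ad}(\gamma_{0})-I)^{k}=0$. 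Since $\ol{\alpha}$ is a torus automorphism, it is represented in the standard coordinates of $\T^{d}$ by an integer matrix $M$ (namely the matrix describing $\alpha$ in a basis of the lattice $G^{o}\cap\Gamma$, modulo finite torsion), and the computation above shows $(M-I)^{k}=0$. Setting $N=M-I$ gives a nilpotent integer matrix with $T\bar{x}=\bar{x}+N\bar{x}+b$, as required.

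The main obstacle is the last step, namely extracting the unipotency of $\ol{\alpha}$ from the nilpotency of $G$; once this algebraic fact is in hand, the proof is essentially an identification of coordinates. A minor technical point to handle along the way is to confirm that $G^{o}/(G^{o}\cap\Gamma)$ carries an integer lattice structure that $\ol{\alpha}$ respects, which follows from $\ol{\alpha}$ being induced by an automorphism of $G^{o}$ mapping $G^{o}\cap\Gamma$ to itself.
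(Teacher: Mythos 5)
The paper does not prove this lemma --- it is quoted verbatim from the cited reference \cite{MR2191231}*{Proposition 3.1} --- so there is no internal proof to compare against; your argument is correct and is essentially the standard proof of that proposition. The chain you use is the right one: total ergodicity gives connectedness of $X$, hence $G=G^{o}\Gamma$ and the identification $X\cong G^{o}/(G^{o}\cap\Gamma)\cong\T^{d}$; writing $a=g_{0}\gamma_{0}$ turns $T$ into an affine map whose linear part is induced by conjugation by $\gamma_{0}$; and nilpotency of $G$ forces $\mathrm{Ad}(\gamma_{0})$ to be unipotent, hence $N=M-I$ nilpotent over $\Z$. The only details worth spelling out are that the continuous bijection $G^{o}/(G^{o}\cap\Gamma)\to X$ is open (hence a homeomorphism), and that the unipotency claim must be established for $\gamma_{0}$ possibly outside $G^{o}$, which your lower central series argument correctly covers.
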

With these tools at hand we can show our equidistribution result on nilmanifolds.
\begin{proposition}
\label{prop:equid}
Let $(X_{i},T_{i})$, $i=1,\dots,\ell$, be totally ergodic nilsystems and $p_{i}$, $i=1,\dots,\ell$, integer polynomials rationally independent from $1$.
Then for every $x_{1},\dots,x_{\ell}$ the polynomial sequence $(T_{1}^{m+p_{1}(n)}x_{1},\dots,T_{\ell}^{m+p_{\ell}(n)}x_{\ell})_{(m,n)\in\Z^{2}}$ is well-distributed on $X_{1}\times\dots\times X_{\ell}$.
\end{proposition}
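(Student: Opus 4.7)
The plan is to transport the problem to a single product nilmanifold and combine Leibman's criterion with a Weyl-type argument. Set $X = X_{1} \times \dots \times X_{\ell} = G/\Gamma$ with $G = \prod_{i} G_{i}$ and $\Gamma = \prod_{i} \Gamma_{i}$; writing $a_{0} = (a_{1}, \dots, a_{\ell})$ and $b_{i}$ for the element of $G$ with $a_{i}$ in the $i$-th coordinate and identity elsewhere, the sequence in question equals $a_{0}^{m} b_{1}^{p_{1}(n)} \cdots b_{\ell}^{p_{\ell}(n)} \cdot x$, a polynomial sequence in $(m,n) \in \Z^{2}$. By Lemma~\ref{lem:equid-mod-comm}, its well-distribution on $X$ is equivalent to that of the projected sequence on $[G^{o}, G^{o}] \backslash X$. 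Since $G^{o} = \prod_{i} G_{i}^{o}$ splits, so does the commutator, and the quotient factors as $\prod_{i} ([G_{i}^{o}, G_{i}^{o}] \backslash X_{i})$; each factor is totally ergodic with abelian connected component and thus, by Lemma~\ref{lem:unipotent}, isomorphic to a unipotent affine system $(\T^{d_{i}}, S_{i})$.

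For such systems, $S_{i}^{t} y_{i} = Q_{i}(t)$ is a polynomial sequence $\Z \to \T^{d_{i}}$, so the reduced sequence is a polynomial map $\Z^{2} \to \prod_{i} \T^{d_{i}}$. For polynomial sequences on a torus, Weyl's criterion yields well-distribution with respect to every F\o{}lner sequence, so it suffices to show that for every nonzero vector $(k_{1}, \dots, k_{\ell}) \in \prod_{i} \Z^{d_{i}}$ the real polynomial
\[
P(m, n) = \sum_{i=1}^{\ell} k_{i} \cdot Q_{i}(m + p_{i}(n))
\]
has an irrational coefficient on some non-constant monomial.

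Verifying this criterion is the main obstacle. Restrict to indices with $k_{i} \neq 0$ and write $k_{i} \cdot Q_{i}(t) = \sum_{j} \beta_{i, j} t^{j}$. Total ergodicity of $S_{i}$ implies that $(k_{i} \cdot S_{i}^{t} y_{i})_{t}$ is equidistributed on $\T$, forcing some $\beta_{i, j}$ with $j \geq 1$ to be irrational. Let $D_{i}$ be the largest such $j$, set $D = \max_{i} D_{i}$ and $I = \{i : D_{i} = D\}$, and let $\gamma_{D-1}(n)$ denote the coefficient of $m^{D-1}$ in $P(m, n)$ (so $\gamma_{0}(n) = P(0,n)$ when $D = 1$). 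Every contribution to $\gamma_{D-1}$ involves either some $\beta_{i, k}$ with $k > D_{i}$ or an $i \notin I$ with $k = D$, each of which is rational, apart from the term $D \sum_{i \in I} \beta_{i, D} p_{i}(n)$, so
\[
\gamma_{D-1}(n) \equiv D \sum_{i \in I} \beta_{i, D} \, p_{i}(n) \pmod{\Q[n]}.
\]
Supposing for contradiction that every non-constant coefficient of $\gamma_{D-1}$ is rational, and using $p_{i}(0) = 0$, one obtains $\sum_{i \in I} \beta_{i, D} p_{i}(n) \in \Q[n]$. Expand each $\beta_{i, D}$ in a $\Q$-basis $\{1, \eta_{1}, \dots, \eta_{s}\}$ of $\mathrm{span}_{\Q}(1, \beta_{i, D} : i \in I)$, say $\beta_{i, D} = q_{i, 0} + \sum_{\nu} q_{i, \nu} \eta_{\nu}$, and extract the $\eta_{\nu}$-components: for each $\nu \geq 1$ one obtains a $\Q$-linear relation $\sum_{i \in I} q_{i, \nu} p_{i}(n) = 0$, and the rational independence of the $p_{i}$'s forces every $q_{i, \nu}$ to vanish, whence $\beta_{i, D} \in \Q$ for $i \in I$, contradicting the choice of $D$.
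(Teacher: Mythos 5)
Your proof is correct and follows essentially the same route as the paper: reduce via Lemma~\ref{lem:equid-mod-comm} and Lemma~\ref{lem:unipotent} to unipotent affine transformations on tori, apply Weyl's well-distribution criterion, and isolate the coefficient of a suitable power of $m$ to exploit rational independence from $1$ (the paper organizes this last step as a downward induction on the degree in $m$ and invokes unique ergodicity of the individual systems only at the end to reach a contradiction, whereas you front-load that input and argue in one step at the maximal irrational degree $D$). One small inaccuracy: $p_{i}(0)=0$ is not among the hypotheses of the proposition, but your argument survives without it, since dropping that assumption only weakens the extracted relations to $\sum_{i\in I}q_{i,\nu}p_{i}(n)\in\Q$ for each $\nu$, and rational independence from $1$ (rather than mere rational independence of the $p_{i}$) still forces all $q_{i,\nu}$ to vanish.
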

It is crucial for our argument that the above equidistribution property holds for \emph{every} tuple $x_{1},\dots,x_{\ell}$.
The following example shows that it is not possible to remove the extra variable $m$ from the proposition.
Let $X=\T^{2}$ and $T : X\to X$ be defined by $(x,y) \mapsto (x+\alpha, y+2x+\alpha)$ with $\alpha$ irrational.
Then $(X,T)$ is topologically isomorphic to a totally ergodic nilsystem, but the sequence $(T^{n}(0,0),T^{n^{2}}(0,0)) = (n\alpha, n^{2}\alpha, n^{2}\alpha, n^{4}\alpha)$ is not equidistributed on $X\times X$.
\begin{proof}
Suppose that $X_{i}=G_{i}/\Gamma_{i}$, $i=1,\dots,\ell$, where $G_{i}$ are nilpotent Lie groups and $\Gamma_{i}\leq G_{i}$ discrete cocompact subgroups, and $T_{i}g\Gamma_{i} = a_{i} g \Gamma_{i}$ for some $a_{i}\in G_{i}$.
Then $X_{1}\times\dots\times X_{\ell} = (G_{1}\times\dots\times G_{\ell})/(\Gamma_{1}\times\dots\times \Gamma_{\ell})$.
By Lemma~\ref{lem:equid-mod-comm} we may assume that $[(G_{1}\times\dots\times G_{\ell})^{o},(G_{1}\times\dots\times G_{\ell})^{o}] = [G_{1}^{o},G_{1}^{o}] \times\dots\times [G_{\ell}^{o},G_{\ell}^{o}]$ is trivial, i.e.\ the connected components of the origins $G_{i}^{o}$ are commutative groups.
Thus by Lemma~\ref{lem:unipotent} we may assume that $(X_{i},T_{i})$ are unipotent affine transformations, i.e.\ $X_{i}=\T^{d_{i}}$ and $T_{i}x=x+N_{i}x+b_{i}$ with a nilpotent integer matrix $N_{i}$ and $b_{i} \in \T^{d_{i}}$.
Then by induction on $n$ we have
\[
T_{i}^{n}x_{i} = x_{i} + \sum_{r=0}^{d_{i}-1} \binom{n}{r+1} N_{i}^{r} (N_{i}x_{i}+b_{i}).
\]
Fix $(x_{1},\dots,x_{\ell})$ and assume that $(T_{1}^{m+p_{1}(n)}x_{1},\dots,T_{\ell}^{m+p_{\ell}(n)}x_{\ell})$ is not well-distributed on the torus $X_{1}\times\dots\times X_{\ell}$.
By Weyl's equidistribution criterion we obtain $l_{i}\in\Z^{d_{i}}$, not all of which are zero, such that
\[
\sum_{i=1}^{\ell} l_{i} \cdot \sum_{r=0}^{d_{i}-1} \binom{m+p_{i}(n)}{r+1} N_{i}^{r} (N_{i}x_{i}+b_{i})
\equiv
0
\mod \Q[m,n].
\]
Let $\alpha_{i,r} := l_{i} \cdot N_{i}^{r}(N_{i}x_{i}+b_{i})$, we will show that $\alpha_{i,r}\in\Q$ for all $i,r$.
Then Weyl's equidistribution criterion implies that $T_{i}^{n}x_{i}$ is not well-distributed on $X_{i}$ for those $i$ with $l_{i}\neq 0$, thus contradicting unique ergodicity of $(X_{i},T_{i})$.

Changing the order of summation we obtain
\[
\sum_{r=0}^{R} \sum_{i=1}^{\ell} \binom{m+p_{i}(n)}{r+1} \alpha_{i,r}
\equiv
0
\mod \Q[m,n],
\]
where $R = \max_{i} d_{i}-1$.
Recall that the binomial coefficient $\binom{n}{k}$ is a polynomial in $n$ of degree $k$.

With this in mind we consider the $m^{R}$ term in the above sum.
The contributions to that term come from $r=R-1$ (which gives a scalar multiple of $m^{R}$) and from $r=R$ which gives $\frac{1}{R!} \sum_{i}m^{R} p_{i}(n) \alpha_{i,R}$ plus some scalar multiple of $m^{R}$.
Since the polynomials $p_{i}$ are rationally independent from $1$ this implies $\alpha_{i,R}\in\Q$.
Hence we obtain
\[
\sum_{r=0}^{R-1} \sum_{i=1}^{\ell} \binom{m+p_{i}(n)}{r+1} \alpha_{i,r}
\equiv
0
\mod \Q[m,n],
\]
and we conclude by induction on $R$.
\end{proof}

\section{A decomposition result}
Let $(X,\mu,T)$ be an arbitrary measure-preserving system (not necessarily ergodic).
The \emph{uniformity seminorms} $U^{k}$, $k=1,2,\dots$, are defined inductively by
\[
\|f\|_{U^0(X,\mu,T)}:=\int_X f\dif\mu,\quad
\|f\|_{U^{k+1}(X,\mu,T)}^{2^{k+1}}:=\lim_{N\to\infty}\E_{n\in [1,N]} \|T^n f \bar{f}\|_{U^{k}(X,\mu,T)}^{2^{k}},\quad
f\in L^{\infty}(X).
\]
We will use the following decomposition result for functions on a not necessarily ergodic system.
For ergodic systems it is a direct consequence of the Host-Kra structure theorem \cite{MR2150389}.
\begin{theorem}[{\cite[Proposition 3.8]{MR2995648}}]
\label{thm:structure}
Let $(X,\mu,T)$ be a measure-preserving system, $f\in L^\infty(X)$, and $k\in \N$.
Then for every $\epsilon>0$ there exist measurable functions $f^s$, $f^u$, and $f^e$, with $L^\infty$ norm at most $2\|f \|_{\infty}$, such that
\begin{enumerate}
\item $f=f^s+f^u+f^e$,
\item $\|f^u\|_{U^{k+1}}=0$, $\|f^e\|_{1} < \epsilon$, and
\item for every $x\in X$ the sequence $(f^s (T^nx))_{n\in\N}$ is a $k$-step nilsequence.
\end{enumerate}
\end{theorem}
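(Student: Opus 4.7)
The plan is to upgrade the ergodic Host--Kra structure theorem \cite{MR2150389} to the non-ergodic setting via the ergodic decomposition, and then replace the resulting structured component by an honest continuous function on a nilmanifold so that one obtains a genuine nilsequence along \emph{every} orbit, not merely almost every orbit.

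First I would fix $f\in L^\infty(X)$, write $\mu=\int\mu_y\,d\nu(y)$ as an integral over its ergodic components, and invoke the Host--Kra theorem for each $\mu_y$ to obtain the $k$-step nilfactor $\HKZ_k(\mu_y)$, an inverse limit of $k$-step nilsystems that controls the seminorm $U^{k+1}$ in the sense that $\|g\|_{U^{k+1}(\mu_y)}=0$ if and only if $\E(g\mid\HKZ_k(\mu_y))=0$. Gluing the fibrewise conditional expectations produces a global sub-$\sigma$-algebra and a function $\tilde f:=\E(f\mid\HKZ_k)$ with $\|\tilde f\|_\infty\leq\|f\|_\infty$; setting $f^u:=f-\tilde f$ then yields $\|f^u\|_{U^{k+1}}=0$ and $\|f^u\|_\infty\leq 2\|f\|_\infty$, which already takes care of the uniform component.

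Next I would approximate $\tilde f$ in $L^1(\mu)$ within $\epsilon$ by a function $f^s$ pulled back from a continuous function on a concrete $k$-step nilsystem, using that each $\HKZ_k(\mu_y)$ is an inverse limit of $k$-step nilsystems. A standard truncation step keeps $\|f^s\|_\infty\leq\|f\|_\infty$ at the cost of at most doubling the $L^1$ error, and then $f^e:=\tilde f - f^s$ satisfies the required bounds. The structural content is that if $f^s = F\circ\pi$ for a continuous $F$ on a nilmanifold $G/\Gamma$ and a factor map $\pi: X\to G/\Gamma$ intertwining $T$ with left multiplication by some $a\in G$, then for every $x\in X$ we have $(f^s(T^nx))_n = (F(a^n\pi(x)))_n$, which is a basic $k$-step nilsequence by definition.

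The main obstacle is precisely this last point combined with the gluing across ergodic components: one must produce a \emph{single} nilsystem (or at worst an appropriate countable family) and a \emph{single} continuous function $F$ that simultaneously approximates $\E(f\mid\HKZ_k(\mu_y))$ in $L^1(\mu_y)$ for $\nu$-almost every $y$, with total $L^1$ error still controlled by $\epsilon$. I would handle this by a measurable selection argument adapted to the inverse-limit presentation of the Host--Kra factor, and then redefine $f^s$ on a $\mu$-null set (absorbing the modification into $f^e$ without affecting either of the required norm bounds) so that the quantifier ``for every $x\in X$'' in item (3) is literal rather than almost-everywhere; this is legitimate because basic nilsequences are defined pointwise on $X$ for every starting point $\pi(x)\in G/\Gamma$, and a pointwise choice of $\pi$ can always be made since factor maps of measure-preserving systems can be realized as everywhere-defined Borel maps on suitable models.
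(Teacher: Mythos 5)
First, be aware that the paper you are reading does not prove this statement at all: it is imported verbatim from \cite{pre06076557}*{Proposition 3.8}, with only the remark that for ergodic systems it is a direct consequence of the Host--Kra structure theorem \cite{MR2150389}. Your strategy --- set $f^{u}=f-\E(f|\HKZ_{k})$, approximate the structured part $\E(f|\HKZ_{k})$ by continuous functions on nilsystems, and absorb the error into $f^{e}$ --- is indeed the standard route behind the cited result. However, the two points you yourself flag as obstacles are the entire content of the proof, and your proposed fixes do not work as stated. For the gluing across ergodic components: conclusion (3) does not ask for a single nilsystem, only that for each $x$ the sequence $(f^{s}(T^{n}x))_{n}$ be a nilsequence, with the nilsystem allowed to vary with the ergodic component of $x$; insisting on one continuous $F$ on one nilmanifold approximating $\E(f|\HKZ_{k}(\mu_{y}))$ for $\nu$-a.e.\ $y$ is both unnecessary and in general unachievable, since distinct ergodic components have genuinely different nilfactors. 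What is actually needed is a choice, \emph{measurable in $y$}, of a nilsystem factor $\pi_{y}$ and a continuous $F_{y}$ with $\int|\E(f|\HKZ_{k}(\mu_{y}))-F_{y}\circ\pi_{y}|\,\dif\mu_{y}$ small on average over $y$, together with a verification that the resulting $f^{s}$ is globally measurable; the one-sentence appeal to ``a measurable selection argument'' is precisely the hard technical step and is not carried out. (One standard way to sidestep it is to build $f^{s}$ from dual functions $\mathcal{D}_{k+1}f$, which are defined by a global pointwise limit of cube averages, hence automatically measurable, and are known to produce nilsequences along orbits in each ergodic component.)

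Second, your treatment of ``for every $x$'' versus ``almost every $x$'' is incorrect as written. Redefining $f^{s}$ on an arbitrary $\mu$-null set does not restore conclusion (3): changing $f^{s}$ at one point $x_{0}$ alters the sequence $(f^{s}(T^{n}x))_{n}$ for \emph{every} $x$ in the orbit of $x_{0}$, so after such a modification the nilsequence property can fail on a set you believed was safe. The correct device is to take the null set $N$ on which the a.e.\ identity $\pi(Tx)=a\pi(x)$ (or the ergodic decomposition) fails, saturate it to the $T$-invariant null set $\bigcup_{n\in\Z}T^{n}N$, and set $f^{s}\equiv 0$ there; then every orbit either avoids the bad set entirely, in which case $f^{s}(T^{n}x)=F(a^{n}\pi(x))$ for all $n$, or stays inside it, in which case $(f^{s}(T^{n}x))_{n}\equiv 0$ is trivially a nilsequence. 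Similarly, ``realizing the factor map as an everywhere-defined Borel map on a suitable model'' is not available to you: the theorem concerns the given $(X,\mu,T)$, and passing to a model changes the space. With these two repairs your outline matches the standard proof, but as it stands both of the genuinely delicate steps are either missing or wrong.
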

Recall that the \emph{rational Kronecker factor} $\Krat(X,T)$ is defined by $\Krat(X,T) = \vee_{r>0}\K_{r}(X,T)$, where $\K_{r}(X,T)$ is the $T^{r}$-invariant sub-$\sigma$-algebra.
The martingale convergence theorem and the pointwise ergodic theorem show that
\begin{equation}
\label{eq:EKrat}
\E_{\mu}(f|\Krat(X,T))(x)
=
\lim_{r\to\infty} \E_{\mu}(f|\K_{r!}(X,T))(x)
=
\lim_{r\to\infty} \lim_{N\to\infty} \E_{n\leq N} f(T^{r! \cdot n} x)
\end{equation}
for every bounded function $f$ and $\mu$-a.e.\ $x$.
We will use a version of this conditional expectation operator for nilsequences.
\begin{lemma}
\label{lem:limit-basic-nilseq-integral}
For every nilsequence $a=(a_{n})$ define a sequence $P(a) = (P_{k}(a))$ by
\begin{equation}
\label{eq:perp-Krat-pointwise}
P_{k}(a) := \lim_{r\to\infty} \lim_{N\to\infty} \E_{n\leq N} a_{r! \cdot n+k}.
\end{equation}
Then the following statements hold.
\begin{enumerate}
\item\label{PKrat:integral} If $a_{n} = g(S^{n}y)$ is a basic nilsequence with an ergodic nilsystem $(Y,S)$, $g\in C(Y)$, and $y\in Y$, then $P_{k}(a) = \int_{Y'_{k}} g$, where $Y'_{k}$ is the connected component of $S^{k}y$ in $Y$ and the integral is taken with respect to the unique normalized Haar measure on $Y'_{k}$.
\item\label{PKrat:proj} $P$ is a well-defined contractive linear projection on the vector space of all nilsequences with the $\ell^{\infty}$ norm (in particular the limit \eqref{eq:perp-Krat-pointwise} exists) that leaves the subspace of basic nilsequences invariant.
\item\label{PKrat:ker} Basic nilsequences are $\ell^{\infty}$-dense in $\ker P$.
\end{enumerate}
\end{lemma}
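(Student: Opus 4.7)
The plan is to first establish claim (\ref{PKrat:integral}) by reducing to totally ergodic systems via Lemma~\ref{lem:conn-comp-tot-erg}, and then deduce (\ref{PKrat:proj}) and (\ref{PKrat:ker}) from this base case by $\ell^{\infty}$-approximation and a standard projection identity.

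For claim (\ref{PKrat:integral}), I would start with $a_{n} = g(S^{n}y)$ on the ergodic nilsystem $(Y,S)$ and apply Lemma~\ref{lem:conn-comp-tot-erg} to obtain $r_{0}\in\N$ such that $S^{r_{0}}$ acts totally ergodically, hence uniquely ergodically, on every connected component of $Y$. For every $r\geq r_{0}$, the integer $r!$ is divisible by $r_{0}$, so $S^{r!}$ preserves and is uniquely ergodic on the connected component $Y'_{k}$ containing $S^{k}y$. Unique ergodicity then yields pointwise convergence
\[
\lim_{N\to\infty} \E_{n\leq N} g(S^{r!\cdot n}(S^{k}y)) = \int_{Y'_{k}} g,
\]
and since the right-hand side is independent of $r\geq r_{0}$, the outer limit exists trivially and equals this integral.

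For claim (\ref{PKrat:proj}), the existence of $P_{k}(a)$ for an arbitrary nilsequence $a$ follows by uniform approximation: if $a'$ is basic with $\|a-a'\|_{\infty}<\epsilon$, then the corresponding inner averages differ by at most $\epsilon$, so the double limit for $a$ oscillates by at most $2\epsilon$ and hence exists. Linearity and $\ell^{\infty}$-contractivity are immediate from the definition. Claim (\ref{PKrat:integral}) shows that for basic $a$ the sequence $k\mapsto P_{k}(a)$ is periodic with period dividing $r_{0}$, since $S^{k}y$ cycles through the finite orbit of connected components of $Y$. This periodicity has two consequences: $P(a)$ is itself a basic nilsequence (realized as a rotation on a finite cyclic group), and for $r\geq r_{0}$ the inner averages defining $P(P(a))$ are constantly equal to $P_{k}(a)$, which gives idempotency $P\circ P=P$ on basic nilsequences. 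Both properties extend to all nilsequences by density and contractivity.

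For claim (\ref{PKrat:ker}), given $a\in\ker P$ and $\epsilon>0$, I would pick a basic nilsequence $b$ with $\|a-b\|_{\infty}<\epsilon/2$ and replace it by $b-P(b)$, a difference of basic nilsequences, which is itself basic (realized on the product of the two underlying nilsystems) and lies in $\ker P$ by idempotency. The estimate
\[
\|a-(b-P(b))\|_{\infty} \leq \|a-b\|_{\infty}+\|P(b-a)\|_{\infty} \leq 2\|a-b\|_{\infty} < \epsilon,
\]
uses $P(a)=0$ and contractivity. The main obstacle is claim (\ref{PKrat:integral}): the definition of $P_{k}$ demands pointwise-everywhere (not merely $\mu$-a.e.) evaluation of the double limit, and this is exactly what unique ergodicity on the totally ergodic components furnished by Lemma~\ref{lem:conn-comp-tot-erg} provides. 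The remaining assertions are essentially formal consequences of this base case.
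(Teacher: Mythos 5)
Your proof is correct and follows essentially the same route as the paper: part (\ref{PKrat:integral}) via Lemma~\ref{lem:conn-comp-tot-erg} and unique ergodicity of $S^{r!}$ on the connected components, then (\ref{PKrat:proj}) and (\ref{PKrat:ker}) by density and idempotency on basic nilsequences. The only cosmetic difference is that you realize $P(a)$ as a periodic sequence, whereas the paper realizes it as a basic nilsequence on the same nilsystem $(Y,S)$ with the function replaced by its componentwise averages; both yield idempotency in the same way.
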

\begin{proof}
To show (\ref{PKrat:integral}) let, with the above notation, $r_{0}$ be so large that $(Y_{k}',S^{r!})$ is ergodic for every $r>r_{0}$ (such $r_{0}$ exists by Lemma~\ref{lem:conn-comp-tot-erg}).
Recall that the nilsystem $(Y_{k}',S^{r!})$ is uniquely ergodic, so that
\[
\lim_{N\to\infty} \E_{n\leq N} a_{r! \cdot n+k}
=
\lim_{N\to\infty} \E_{n\leq N} g((S^{r!})^{n} S^{k}y)
=
\int_{Y_{k}'} g,
\]
and the claim follows.
In particular, the limit \eqref{eq:perp-Krat-pointwise} exists for every basic nilsequence, and by density for every nilsequence.

It is clear that $P$ is linear and contractive.
Let $a$ be a basic nilsequence, then $P(a)$ is also a basic nilsequence as witnessed, with the notation of (\ref{PKrat:integral}), by the nilsystem $(Y,S)$ and the continuous function that on each connected component of $Y$ equals the mean value of $g$ over this connected component.
In particular, $P^{2}(a) = P(a)$ for every basic nilsequence $a$ and we obtain (\ref{PKrat:proj}) by density.

Finally, if $a \in \ker P$ is a nilsequence and $a'$ is a basic nilsequence such that $\|a-a'\|_{\ell^{\infty}} < \epsilon$, then $a'-P(a') \in\ker P$ is also a basic nilsequence and $\|a-(a'-P(a'))\|_{\ell^{\infty}} \leq \|a-a'\|_{\ell^{\infty}} + \|P(a)-P(a')\|_{\ell^{\infty}} < 2\epsilon$, and (\ref{PKrat:ker}) follows.
\end{proof}
For functions that are orthogonal to the rational Kronecker factor we have the following version of Theorem~\ref{thm:structure}.
\begin{corollary}
\label{cor:structure-Krat}
If under the assumptions of Theorem~\ref{thm:structure} we also have $\E_{\mu}(f|\Krat(X,T)) = 0$, then, relaxing the $L^{\infty}$ bound to $4\|f\|_{\infty}$ and the $L^{1}$ bound to $2\epsilon$, we may assume that $P(f^{s}(T^{n} x)) = 0$ for every $x$.
\end{corollary}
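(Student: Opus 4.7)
The plan is to start from the decomposition $f = f^{s}+f^{u}+f^{e}$ provided by Theorem~\ref{thm:structure} and absorb the ``rational Kronecker part'' of the structured piece into the error term. For each $x\in X$ the sequence $(f^{s}(T^{n}x))_{n}$ is a $k$-step nilsequence, so one defines pointwise
\[
g(x) := P_{0}\bigl((f^{s}(T^{n}x))_{n}\bigr).
\]
This makes sense for every $x$ by Lemma~\ref{lem:limit-basic-nilseq-integral}(\ref{PKrat:proj}), and contractivity of $P$ on $\ell^{\infty}$ gives $|g(x)|\le \|f^{s}\|_{\infty}\le 2\|f\|_{\infty}$. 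Setting $\tilde f^{s} := f^{s}-g$ and $\tilde f^{e} := f^{e}+g$ produces a decomposition $f = \tilde f^{s} + f^{u} + \tilde f^{e}$ with $L^{\infty}$ norms at most $4\|f\|_{\infty}$.

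The key point is that the definition of $g$ through \eqref{eq:perp-Krat-pointwise} yields the pointwise equivariance $g(T^{n}x) = P_{n}\bigl((f^{s}(T^{m}x))_{m}\bigr)$ for every $x$, since both sides unwind to the same iterated limit $\lim_{r}\lim_{N}\E_{j\le N}f^{s}(T^{r!j+n}x)$. Writing $a^{x} = (f^{s}(T^{n}x))_{n}$ this gives $(\tilde f^{s}(T^{n}x))_{n} = a^{x} - P(a^{x})$; since $P$ is idempotent and preserves the class of nilsequences by Lemma~\ref{lem:limit-basic-nilseq-integral}(\ref{PKrat:proj}), the orbit $(\tilde f^{s}(T^{n}x))_{n}$ is itself a $k$-step nilsequence and $P\bigl((\tilde f^{s}(T^{n}x))_{n}\bigr) = 0$ for every $x$, as required.

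It remains to bound $\|\tilde f^{e}\|_{1}$, which reduces to showing $\|g\|_{1}<\epsilon$. Comparing \eqref{eq:perp-Krat-pointwise} with \eqref{eq:EKrat} identifies $g$ $\mu$-a.e.\ with $\E_{\mu}(f^{s}\mid\Krat)$, and the hypothesis $\E_{\mu}(f\mid\Krat)=0$ splits
\[
g = -\E_{\mu}(f^{u}\mid\Krat) - \E_{\mu}(f^{e}\mid\Krat) \quad \mu\text{-a.e.}
\]
The second summand contributes at most $\|f^{e}\|_{1}<\epsilon$ by contractivity of conditional expectation, so the whole estimate hinges on proving $\E_{\mu}(f^{u}\mid\Krat)=0$. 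This is the step I expect to be the main obstacle: monotonicity of the uniformity seminorms gives $\|f^{u}\|_{U^{2}}=0$, and combined with the fact that $\Krat$ is contained in the Host--Kra Kronecker factor---reducible, if one prefers, to the ergodic case via the identity $\|f\|_{U^{2}(\mu)}^{2^{2}} = \int \|f\|_{U^{2}(\mu_{\omega})}^{2^{2}}\,\dif\nu(\omega)$ on the ergodic decomposition---one concludes $\E_{\mu}(f^{u}\mid\Krat)=0$. Consequently $\|g\|_{1}<\epsilon$ and $\|\tilde f^{e}\|_{1}<2\epsilon$, completing the plan.
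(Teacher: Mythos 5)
Your argument is correct and follows essentially the same route as the paper: define $g=P_{0}(f^{s}(T^{n}\cdot))$, identify it $\mu$-a.e.\ with $\E_{\mu}(f^{s}|\Krat(X,T))$, show $\E_{\mu}(f^{u}|\Krat(X,T))=0$ so that $\|g\|_{1}<\epsilon$, and shift $g$ from the structured part to the error term, using idempotency of $P$ for the conclusion. The step you single out as the main obstacle is exactly what the paper handles by citing the known implication $\|f^{u}\|_{U^{k+1}(X,\mu,T)}=0\Rightarrow\|f^{u}\|_{U^{1}(X,\mu,T^{r!})}=0$ together with the mean ergodic theorem; your monotonicity-plus-ergodic-decomposition variant is a standard equivalent of that fact.
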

\begin{proof}
Consider the decomposition $f=f^{s}+f^{u}+f^{e}$ given by Theorem~\ref{thm:structure}.
It is known that $\|f^{u}\|_{U^{k+1}(X,\mu,T)}=0$, $k\geq 1$, implies that $\|f^{u}\|_{U^{1}(X,\mu,T^{r!})}=0$ for every $r\in\N$, see e.g.\ \cite[\textsection 2.2]{MR2795725}.
By the mean ergodic theorem we have $\|f^{u}\|_{U^{1}(X,\mu,T^{r!})} = \|\E(f^{u}|\K_{r!}(T))\|_{L^{2}(X,\mu)}$, so that $\E_{\mu}(f^{u}|\Krat(X,T)) = 0$ by \eqref{eq:EKrat}.
By linearity of conditional expectation we obtain
\begin{equation}
\label{eq:Efs-small}
\|\E_{\mu}(f^{s}|\Krat(X,T))\|_{1} = \|\E_{\mu}(f^{e}|\Krat(X,T))\|_{1} < \epsilon.
\end{equation}
By Lemma~\ref{lem:limit-basic-nilseq-integral} the limit
\[
f'(x):=P_{0}(f^{s}(T^{n}x))=\lim_{r\to\infty} \lim_{N\to\infty} \E_{n\leq N} f^{s}(T^{r! \cdot n} x)
\]
exists for every $x$.
The function $f'$ is measurable and bounded by $2\|f\|_{\infty}$.
On the other hand by \eqref{eq:EKrat} we have $f'=\E_{\mu}(f^{s}|\Krat(X,T))$, so that $\|f'\|_{1} < \epsilon$ by \eqref{eq:Efs-small}.
The claim follows from Lemma~\ref{lem:limit-basic-nilseq-integral}(\ref{PKrat:proj}) upon replacing $f^{s}$ by $f^{s}-f'$ and $f^{e}$ by $f^{e}+f'$.
\end{proof}

\section{Characteristic factors and proof of the recurrence result}
In order to prove Theorem~\ref{thm:recurrence} we show that the rational Kronecker factors of $T_{1},\dots,T_{\ell}$ are characteristic for the (pointwise) convergence of the corresponding multiple ergodic averages, from which point the recurrence result follows by a computation.

In order to dispose of the uniform part in the decomposition provided by Theorem~\ref{thm:structure} we use the following result.
\begin{theorem}[{\cite[Theorem 1.4]{MR2995648}}]
\label{thm:Zk-char}
Let $(X,\mu),T_{1},\dots,T_{\ell},(b(N))$ be as in Theorem~\ref{thm:recurrence}
and let $p_{1},\dots,p_{\ell}$ be essentially distinct polynomials (i.e.\ their pairwise differences are not constant).
Then there exists $k$ such that for any functions $f_{i}\in L^{\infty}(X)$, $i=1,\dots,\ell$, with $\|f_{i_{0}}\|_{U^{k}(X,\mu,T_{i_{0}})} = 0$ for some $i_{0}$, for a.e.\ $x\in X$ we have
\[
\lim_{N\to\infty} \E_{m\in [1,N]} | \E_{n\in [1,b(N)]} f_{1}(T_{1}^{m+p_{1}(n)}x) \dots f_{\ell}(T_{\ell}^{m+p_{\ell}(n)}x) |^{2}
= 0.
\]
\end{theorem}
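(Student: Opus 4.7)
My plan is to establish this via a PET-type (van der Corput plus polynomial exhaustion) induction on the complexity of the polynomial family $(m+p_{1}(n),\dots,m+p_{\ell}(n))$, performed with $m$ as the running variable. The key structural feature to exploit is the asymmetry of the F\o{}lner sets $\Phi_{N}=[1,N]\times[1,b(N)]$ with $b(N)=o(N^{1/d})$: it allows one to apply van der Corput's inequality to the outer $m$-average with a shift parameter $H=H(N)$ satisfying $b(N)^{d}\ll H \ll N$. Shifts introduced along the way, all of size $O(b(N)^{d})$, are then negligible on the $m$-scale, while $H$ itself is much smaller than the averaging length $N$, so the boundary terms in van der Corput are harmless.

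\textbf{Main steps.} First I would expand $|\E_{n\in[1,b(N)]}\prod_{i} f_{i}(T_{i}^{m+p_{i}(n)}x)|^{2}$ as a double average over $(n,n')\in[1,b(N)]^{2}$ of a $2\ell$-fold product of functions $f_{i}$ and $\ol{f_{i}}$ evaluated along the orbits of $T_{i}$. Applying van der Corput to the outer $m$-average with the shift scale $H$ above and squaring, one obtains a four-variable average over $(n,n',h,h')$ of a $4\ell$-fold product with shift $h-h'$ in the exponents. Iterating in the manner of Bergelson's PET induction, at each step I would designate a ``pivot'' factor involving $f_{i_{0}}$ so that this function is never differenced away, while the polynomial configuration strictly decreases in a suitable well-order keyed to degrees and leading-coefficient type. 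After finitely many steps, in number depending only on $\ell$ and $d$, the expression reduces to one dominated by the Host--Kra seminorm $\|f_{i_{0}}\|_{U^{k}(X,\mu,T_{i_{0}})}$ for some $k=k(\ell,d)$, which vanishes by hypothesis. Integrating in $x$ yields $L^{1}(X,\mu)$ convergence of the squared average to $0$.

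\textbf{Main obstacle.} The principal challenge, aside from the bookkeeping of the PET induction, is upgrading the integrated estimate to the claimed almost everywhere pointwise convergence: the limit is along a single direction $N\to\infty$, so passing to a subsequence via Fatou is insufficient. The standard remedy is a two-stage argument: approximate $f_{i_{0}}$ in $L^{2}$ by functions for which pointwise convergence is already known (for instance, via the decomposition of Theorem~\ref{thm:structure} together with a dense countable family of test functions), and handle the error through a Hardy--Littlewood-type maximal inequality for the outer $m$-average, valid uniformly in $x$. An additional subtlety is that the non-commutativity of the $T_{i}$ precludes any direct reduction to a single-transformation problem, so each PET step must be carried out in the free non-commutative setting; only at the very end, with $f_{i_{0}}$ isolated as the sole surviving function, does the seminorm hypothesis for $T_{i_{0}}$ come into play.
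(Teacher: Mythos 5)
This statement is not proved in the paper at all: it is imported verbatim from \cite{pre06076557}*{Theorem 1.4}, so there is no internal proof to compare against. Judging your proposal on its own merits, the overall flavor (van der Corput plus PET, reduction to the $U^{k}(T_{i_{0}})$-seminorm, exploitation of $b(N)^{d}=o(N)$) is the right one, but the central mechanism is set up in the wrong variable, and this is a genuine gap. Writing $b_{i}(j)=f_{i}(T_{i}^{j}x)$ for fixed $x$, the quantity to be bounded is $\E_{m\leq N}\bigl|\E_{n\leq b(N)}\prod_{i}b_{i}(m+p_{i}(n))\bigr|^{2}$. Van der Corput applied to the $m$-average with a shift $h$ produces factors $b_{i}(\cdot)\,\ol{b_{i}(\cdot+h)}$ in which the dependence on $n$ through $p_{i}(n)$ is completely untouched: since $m$ enters linearly and identically in every exponent, differencing in $m$ never forms the differences $p_{i}(n+h)-p_{i}(n)$ that drive the PET induction, so your ``polynomial configuration'' does not decrease and the induction cannot close. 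The exhaustion has to be run in the \emph{inner} variable $n$ (for each fixed $m$), where the polynomials actually live; this is also where the non-commutativity evaporates, because after passing to the scalar orbit sequences $b_{i}$ the statement is a deterministic inequality about bounded sequences and no group structure is involved. Your suggestion to ``carry out each PET step in the free non-commutative setting'' points at a real difficulty with your setup, but the correct resolution is this reduction to sequences, not a non-commutative van der Corput, which does not exist in the form you would need.

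Two further points. First, the role of the hypothesis $b(N)^{d}=o(N)$ is not to license a van der Corput shift scale $H$ with $b(N)^{d}\ll H\ll N$; it is that after PET in $n$ the inner average is controlled by \emph{local} Gowers-type box norms of $b_{i_{0}}$ over windows of length $O(b(N)^{d})$ around $m$, and one needs these windows to be short compared with the outer averaging length $N$ so that $\E_{m\leq N}$ of the local norms becomes a Birkhoff average (under $T_{i_{0}}$) of cube functions $\prod_{\epsilon}T_{i_{0}}^{h\cdot\epsilon}\mathcal{C}^{|\epsilon|}f_{i_{0}}$. Second, this last observation is also what produces the almost everywhere statement: one does not need to upgrade an $L^{1}$ bound via a dense class and a maximal inequality, because the pointwise ergodic theorem applied to those cube functions already yields, for a.e.\ $x$, a bound by $\|f_{i_{0}}\|_{U^{k}(X,\mu,T_{i_{0}})}^{2^{k}}=0$. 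So the a.e.\ issue you flag as the ``main obstacle'' is in fact the easiest part once the estimate is organized correctly, while the obstacle you would actually face --- making a shift purely in $m$ reduce the degree in $n$ --- is not surmountable as described.
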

Without the extra variable $m$ this result fails for general non-commuting transformations as can be seen from \cite[Theorem 1.7]{MR3043589}.
For commuting transformations convergence to zero for functions with zero $U^{k}(T_{i})$-seminorm (without the $m$) is known for distinct degree polynomials and is known to fail if two polynomials are rationally dependent.
The case of pairwise rationally independent polynomials and commuting transformations is open.

Next we apply our equidistribution result to the structured part of the decomposition provided by Theorem~\ref{thm:structure} to show that the rational Kronecker factor is in fact characteristic for our averages.
\begin{proposition}
\label{prop:Krat-char}
Let $p_{1},\dots,p_{\ell}$ be integer polynomials rationally independent from $1$ and let $(X,\mu),T_{1},\dots,T_{\ell},\Phi_{N}$ be as in Theorem~\ref{thm:recurrence}.
Let also $f_{i}\in L^{\infty}(X)$, $i=1,\dots,\ell$, and assume that $\E_{\mu}(f_{i_{0}} | \Krat(X,T_{i_{0}})) = 0$ for some $i_{0}$.
Then for a.e.\ $x\in X$ we have
\begin{equation}
\label{eq:lim0}
\lim_{N\to\infty} \E_{(m,n)\in \Phi_{N}} f_{1}(T_{1}^{m+p_{1}(n)}x) \dots f_{\ell}(T_{\ell}^{m+p_{\ell}(n)}x)
= 0.
\end{equation}
\end{proposition}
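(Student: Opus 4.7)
The plan is to decompose each $f_i$ via the structure results (Corollary~\ref{cor:structure-Krat} and Theorem~\ref{thm:structure}), expand the product multilinearly, and reduce every resulting summand to one of three tools: Theorem~\ref{thm:Zk-char} for uniform pieces, a Birkhoff-type bound for error pieces, or Proposition~\ref{prop:equid} for structured pieces. Fix $\epsilon>0$ and let $k$ be the integer produced by Theorem~\ref{thm:Zk-char}. Applying Corollary~\ref{cor:structure-Krat} to $f_{i_0}$ and Theorem~\ref{thm:structure} to $f_i$ for $i\neq i_0$ yields decompositions $f_i=f_i^s+f_i^u+f_i^e$ such that $(f_i^s(T_i^n x))_n$ is a $k$-step nilsequence for every $x$, $\|f_i^u\|_{U^{k+1}(X,\mu,T_i)}=0$, $\|f_i^e\|_1<2\epsilon$, and additionally $P(f_{i_0}^s(T_{i_0}^n x))=0$ for every $x$. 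Expanding $\prod_i f_i$ splits the left-hand side of \eqref{eq:lim0} into $3^\ell$ averages to be estimated separately.

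For any summand containing a factor $f_i^u$, Theorem~\ref{thm:Zk-char} applied with this $i$ in the role of $i_0$ gives $\E_{m\in[1,N]}|\E_{n\in[1,b(N)]}(\cdots)|^2\to 0$ pointwise a.e., and Cauchy--Schwarz in $m$ upgrades this to the vanishing of the absolute value of the full $\Phi_N$-average. For any summand containing some $f_i^e$ but no $f_j^u$, bound its absolute value by $\prod_{j\neq i}\|f_j\|_\infty\cdot\E_{(m,n)\in\Phi_N}|f_i^e(T_i^{m+p_i(n)}x)|$; since $b(N)=o(N^{1/d})$ forces $|p_i(n)|=o(N)$ uniformly on $\Phi_N$, the Birkhoff ergodic theorem for $T_i$ bounds the $\limsup$ pointwise a.e.\ by $\E_\mu(|f_i^e|\mid\mathcal{I}(T_i))(x)$, a function of $L^1$-norm at most $2\epsilon$.

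The only remaining summand is $M_N(x):=\E_{(m,n)\in\Phi_N}\prod_i f_i^s(T_i^{m+p_i(n)}x)$. For $\mu$-a.e.\ $x$, I would approximate each $(f_i^s(T_i^n x))_n$ within $\delta$ in $\ell^\infty$ by a basic nilsequence $g_i(S_i^n y_i)$ on an ergodic nilsystem $(Y_i,S_i)$; for $i=i_0$, Lemma~\ref{lem:limit-basic-nilseq-integral}(\ref{PKrat:ker}) lets one choose the approximation in $\ker P$, so that by Lemma~\ref{lem:limit-basic-nilseq-integral}(\ref{PKrat:integral}) the integral of $g_{i_0}$ over every connected component of $Y_{i_0}$ vanishes. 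By Lemma~\ref{lem:conn-comp-tot-erg} one picks a common $r\in\N$ so that $(Y_i',S_i^r)$ is totally ergodic on every connected component $Y_i'$ of every $Y_i$. Split the $\Phi_N$-average by residues modulo $r$: for fixed $(m_0,n_0)$ with $m=rm'+m_0,\,n=rn'+n_0$, Taylor expansion yields $m+p_i(n)=r(m'+q_{i,n_0}(n'))+(m_0+p_i(n_0))$ for integer polynomials $q_{i,n_0}$ of the same degree as $p_i$; moreover $\{1,m'+q_{1,n_0},\dots,m'+q_{\ell,n_0}\}$ inherits $\Q$-linear independence from $\{1,p_1,\dots,p_\ell\}$, since any relation would project to one among the original $p_i$'s. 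Hence Proposition~\ref{prop:equid} applies to the totally ergodic product $\prod_i(Y_i',S_i^r)$ and yields well-distribution of the polynomial orbit on $\prod_i Y_i'$; each residue-class contribution of $M_N$ converges to $\prod_i\int_{Y_i'}g_i$, which vanishes because the $i_0$-th factor does. Summing over residues and sending $\delta\to 0$ then $\epsilon\to 0$ finishes the argument.

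The principal technical obstacle I anticipate is the measurability in $x$ of the approximating basic nilsequences, whose data $(Y_i,S_i,g_i,y_i)$ depend on $x$: the pointwise analysis of the main summand has to assemble into an a.e.\ statement on $X$. I would address this by working with the Host--Kra pro-nilfactor of each $(X,\mu,T_i)$ in which $f_i^s$ sits canonically, and using its separability to reduce the approximation to a countable dense family of basic nilsequences whose parameters can be chosen measurably on a set of full measure. A secondary point is the $\epsilon,\delta\to 0$ diagonalization, which is standard given the $L^1$ control on the error terms.
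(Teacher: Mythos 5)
Your proposal is correct and follows essentially the same route as the paper: the same three-part decomposition, Theorem~\ref{thm:Zk-char} for summands containing a uniform factor, the growth condition on $b(N)$ plus the ergodic theorem for the error factors, and Proposition~\ref{prop:equid} applied to the totally ergodic systems $(Y_i',S_i^r)$ along residue classes mod $r$, with Lemma~\ref{lem:limit-basic-nilseq-integral} forcing $\int_{Y_{i_0}'}g_{i_0}=0$ in the main term. The measurability obstacle you anticipate does not actually arise: the structured summand is estimated for each fixed $x$ separately (the approximating nilsystem data are allowed to depend on $x$), and only the resulting numerical bound $O(\epsilon)$, uniform in $x$, needs to be assembled with the a.e.\ statements for the other summands.
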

\begin{proof}
By symmetry we can assume $i_{0}=1$.
Let $\epsilon>0$ be fixed.
It suffices to show that the limit superior in \eqref{eq:lim0} is $O(\epsilon)$ outside a set of measure $O(\epsilon)$.

Let $k\in\N$ be as in Theorem~\ref{thm:Zk-char} and let $f_{i} = f_{i}^{s} + f_{i}^{e} + f_{i}^{u}$ be the decomposition provided by Theorem~\ref{thm:structure}.
By Theorem~\ref{thm:Zk-char} the contribution of the terms involving $f_{i}^{u}$ to the limit superior in \eqref{eq:lim0} vanishes a.e.

It follows from the growth assumption on $b(N)$ that
\[
\E_{(m,n)\in\Phi_{N}} | T_{i}^{m+p(n)}f_{i}^{e} | - \E_{m\leq N} | T_{i}^{m}f_{i}^{e} | \to 0
\text{ as } N\to\infty
\]
pointwise.
Since the functions $f_i^s$ and $f_i^e$ are bounded, this shows that the contribution of the terms involving $f_i^e$ to the limit superior in \eqref{eq:lim0} is $O(\epsilon)$ outside an exceptional set of measure $O(\epsilon)$ by the ergodic maximal inequality.

By Corollary~\ref{cor:structure-Krat} we may assume that $P(f_{1}(T_{1}^{n}x))=0$ for every $x$.
It suffices to show that in this case
\[
\limsup_{N}
|\E_{(m,n) \in \Phi_{N}} f_{1}^{s}(T_{1}^{m+p_{1}(n)}x) \dots f_{\ell}^{s}(T_{\ell}^{m+p_{\ell}(n)}x)|
= O(\epsilon).
\]
By the definition of a nilsequence there exist ergodic nilsystems $(Y_{i},S_{i})$, continuous functions $g_{i}\in C(Y_{i})$, and points $y_{i}\in Y_{i}$ such that $|f_{i}^{s}(T_{i}^{m}x) - g_{i}(S_{i}^{m}y_{i})| < \epsilon$ for every $m\in\Z$.
By Lemma~\ref{lem:limit-basic-nilseq-integral}(\ref{PKrat:ker}) we may assume
\begin{equation}
\label{eq:Pg1zero}
P(g_{1}(S_{1}^{m}y_{1})) = 0.
\end{equation}

By Lemma~\ref{lem:conn-comp-tot-erg} there exists a factorial $r$ such that $S_{i}^{r}$ is totally ergodic on each of the finitely many connected components of $Y_{i}$ for every $i$.

Let $(a,b)\in\Z^{2}$ be fixed, let $y_{i}' = S_{i}^{a+p_{i}(b)}y_{i}$, and let $Y'_{i} \subset Y_{i}$ be the connected component of $y_{i}'$ in $Y_{i}$.
It follows from Proposition~\ref{prop:equid} that the polynomial sequence
\[
(S_{1}^{m+p_{1}(n)}y_{1}, \dots, S_{\ell}^{m+p_{\ell}(n)}y_{\ell}),
\quad
(m,n)\in (a,b) + r \Z^{2}
\]
is well-distributed on $Y_{1}'\times\dots\times Y_{\ell}'$, and in particular
\[
\lim_{N\to\infty} \E_{(m,n) \in ((a,b)+r\Z^{2}) \cap \Phi_{N}} f_{1}^{s}(T_{1}^{m+p_{1}(n)}x) \dots f_{\ell}^{s}(T_{\ell}^{m+p_{\ell}(n)}x)
=
\prod_{i=1}^{\ell} \int_{Y_{i}'} g_{i}
+ O(\epsilon).
\]
By Lemma~\ref{lem:limit-basic-nilseq-integral}(\ref{PKrat:integral}) it follows from \eqref{eq:Pg1zero} that $\int_{Y_{1}'}g_{1} = 0$, and we obtain the claim by averaging over $a,b\in\{0,\dots,r-1\}$.
\end{proof}

\begin{proof}[Proof of Theorem~\ref{thm:recurrence}]
Let $f_{0}=f_{1}=\dots=f_{\ell}=1_{A}$ and let $\epsilon>0$ be fixed.
We will find a subgroup of $\Z^{2}$ on which the limit inferior of the ergodic averages exceeds $\mu(A)^{\ell+1}-\epsilon$.

Let $r\in\N$ be so large that $\|\E(f_{i}|\Krat(X,T_{i})) - \E(f_{i}|\K_{r}(X,T_{i})) \|_{2} <\epsilon/\ell$ for every $i$.
Consider the F\o{}lner sequence $\Phi_{N}' = [1,\lfloor N/r\rfloor]\times [1,\lfloor b(N)/r \rfloor]$.
Since $r\Phi_{N}' \subset \Phi_{N}$ and $|r\Phi_{N}'|/|\Phi_{N}| \to 1/r^{2} > 0$ as $N\to\infty$, it suffices to show that
\begin{equation}
\label{eq:power-bound}
\liminf_{N\to\infty} \int \E_{(m,n)\in r\Phi'_{N}} f_{0} T_{1}^{m+p_{1}(n)}f_{1} \dots T_{\ell}^{m+p_{\ell}(n)}f_{\ell}
\dif\mu
>
\mu(A)^{\ell+1}-\epsilon.
\end{equation}
Applying Proposition~\ref{prop:Krat-char} to the F\o{}lner sequence $(\Phi_{N}')$, measure-preserving transformations $T_{i}^{r}$, and polynomials $p_{i}(rn)/r$, we see that the limit inferior does not change upon replacing $f_{i}$ by $\E(f_{i}|\Krat(X,T_{i}))$ for each $i=1,\dots,\ell$ (strictly speaking, Proposition~\ref{prop:Krat-char} is only applicable to subsequences of $(\Phi_{N}')$ for which $\lfloor N/r\rfloor$ is strictly monotonically increasing, but since $(\Phi_{N}')$ can be split into $r$ such subsequences this suffices).
Hence, by the choice of $r$, the limit inferior in \eqref{eq:power-bound} is bounded below by
\[
\liminf_{N\to\infty} \int \E_{(m,n)\in r\Phi_{N}'} f_{0} T_{1}^{m+p_{1}(n)}\E(f_{1}|\K_{r}(X,T_{1})) \dots T_{\ell}^{m+p_{\ell}(n)}\E(f_{\ell}|\K_{r}(X,T_{\ell}))
\dif\mu
- \epsilon.
\]
Since $\E(f_{i}|\K_{r}(X,T_{i}))$ is $T_{i}^{m+p_{i}(n)}$-invariant for every $(m,n)\in r\Z^{2}$, this equals
\[
\int f_{0} \E(f_{1}|\K_{r}(X,T_{1})) \dots \E(f_{\ell}|\K_{r}(X,T_{\ell}))
\dif\mu
- \epsilon,
\]
and the above integral is bounded below by $\mu(A)^{\ell+1}$ by \cite[Lemma 1.6]{MR2794947}.
\end{proof}

\printbibliography
\end{document}
